%
%
\documentclass{rmmcart}
\usepackage{dsfont}
\usepackage{amssymb}
\usepackage{verbatim}
\input xy
\xyoption{all}
\everymath{\displaystyle}
\sloppy

\newtheorem{theorem}{Theorem}[section]
\newtheorem{lemma}[theorem]{Lemma}
\newtheorem{proposition}[theorem]{Proposition}

\theoremstyle{definition}
\newtheorem{definition}[theorem]{Definition}
\newtheorem{eg}[theorem]{Example}

\newtheorem{remark}[theorem]{Remark}
\newtheorem{corollary}[theorem]{Corollary}

\numberwithin{equation}{section}


\newcommand{\IC}{\mathbb{C}}

 \newcommand{\IN}{\mathbb{N}}

 \newcommand{\IT}{\mathbb{T}}

\newcommand{\CA}{\mathcal{A}}
\newcommand{\CB}{\mathcal{B}}
\newcommand{\CC}{\mathcal{C}}

\newcommand{\CH}{\mathcal{H}}

\newcommand{\CL}{\mathcal{L}}
\newcommand{\CM}{\mathcal{M}}

\newcommand{\CS}{\mathcal{S}}

\newcommand{\CK}{\mathcal{K}}

\newcommand{\CV}{\mathcal{V}}
\newcommand{\CX}{\mathcal{X}}

\newcommand{\ls}{(E,\mathcal{L}, \mathcal{B})}

\title{On Labeled Graph $C^*$-algebras}

\author{Debendra P Banjade}  
\address{Department of Mathematics and Statistics, Coastal Carolina
University, Conway, SC 29528--6054}
\email{dpbanjade@coastal.edu}   

\author{Menassie Ephrem}  
\address{Department of Mathematics and Statistics, Coastal Carolina
University, Conway, SC 29528--6054}
\email{menassie@coastal.edu}   





\thanks{The authors would like to thank the referee for the thorough review and suggestions. Also, the authors would like to thank Andrew Incognito for important feedback on the first version of this manuscript.}

\thanks{The second author would like to thank Mark Tomforde for the introduction of the topic and for his continued support.}

\keywords{Labeled graph, Directed graph, Cuntz--Krieger algebra, Labeled Graph C$^*$-algebra}

\subjclass{46L05, 46L35, 46L55}

\begin{document}

\date{July, 2019.}

\begin{abstract}

Given a directed graph $E$ and a labeling $\mathcal{L}$, one forms the labeled graph $C^*$-algebra by taking a weakly left--resolving labeled space  $(E, \mathcal{L}, \mathcal{B})$ and considering a universal generating family of partial isometries and projections.  In this paper, we work on ideals for a labeled graph $C^*$-algebra when the graph contains sinks.  Using some of the tools we build, we compute $C^*(E, \mathcal{L}, \mathcal{B})$ when $E$ is a finite graph.

\end{abstract}

\maketitle

\section{Introduction} \label{sect.z21}

For many decades graphs have been used as a tool to study a large class of $C^*$-algebras.  In \cite{KPRR}, Kumjian, Pask, Raeburn and Renault defined the graph groupoid of a countable row--finite directed graph with no sinks, and showed that the $C^*$-algebra of this groupoid coincided with a universal $C^*$-algebra generated by partial isometries satisfying relations naturally generalizing those given in \cite{Cu}.  
More recently several people have worked on generalizing these results to arbitrary directed graphs, higher-rank graphs, and ultra--graphs.  

In the early 2000s, Tomforde introduced ultra--graph $C^*$-algebras.  Bates and Pask, in \cite{BP1} introduced a new class of $C^*$-algebras called $C^*$-algebras of labeled graphs.  Later, in a series of papers (along with Carlsen) they provided some classification of these algebras including computation of $K$-theory.  More recently,  Jeong, Kang and Kim, in \cite{JKK}, among other interesting results, they provided a characterization for labeled graph $C^*$-algebra to be an AF algebra.

A directed graph $E = (E^0, E^1,~~ s,~~ r)$ consists of a
countable set $E^0$ of vertices and $E^1$ of edges, and maps $s,r:
E^1 \rightarrow E^0$ identifying the source (origin) and the range
 (terminus) of each edge. The graph is row--finite if each
vertex emits at most finitely many edges. A vertex is a sink if it
is not a source of any edge.  A path is a
sequence of edges $e_1e_2\ldots e_n$ with $r(e_i) = s(e_{i+1})$
for each $i = 1,2,\ldots , n-1$. An infinite path is a sequence
$e_1e_2\ldots$ of edges with $r(e_i)=s(e_{i+1})$ for each $i$.

For a finite path $p=e_1e_2 \ldots e_n$, we define $s(p): =s(e_1)$
and $r(p):= r(e_n)$. For an infinite path $p=e_1e_2 \ldots$, we
define $s(p):=s(e_1)$. We use the following notations:

$E^*:=\bigcup_{n=0}^\infty E^n,$ where $E^n$ := $\{p:p \text{ is a path of length } n\}.$

$E^{**}~~ :=~~ E^* \cup E^\infty$, where $E^\infty$ is the set of infinite paths.

Several works have been done on labeled graph $C^*$-algebras with the restriction that the graph has no sinks. In this paper, we will present results on these algebras when the graph may have sinks.  The paper is organized as follows.  In section 2, we develop some terminology for labeled graphs.  Then, in section 3, we briefly describe labeled graph $C^*$-algebras.  In section 4, after building the tools needed, we provide the theorems that characterize ideals of labeled graph $C^*$-algebras and finally describe the $C^*$-algebras for finite graphs with no loops or cycles.

\section{Preliminaries}\label{preliminaries}

Let $E = (E^0, ~E^1, ~s, ~r)$ be a directed graph and let $\CA$ be an alphabet.  A labeling is a function $\CL : E^1 \longrightarrow \CA$.  Without loss of generality, we will assume that $\CA = \CL(E^1)$.  The pair $(E,~\CL)$ is called a labeled graph.

Given a labeled graph $(E, ~\CL$), we extend the labeling function $\CL$ canonically to the sets $E^*$ and $E^\infty$ as follows.
Using $\CA^n$ for the set of words of size $n$, $\CL$ is defined from $E^n$ into $\CA^n$ as $\CL(e_1e_2\ldots e_n) = \CL(e_1)\CL(e_2)\ldots \CL(e_n)$.  Similarly, for $p = e_1e_2\ldots \in E^\infty$, $\CL(p) = \CL(e_1)\CL(e_2)\ldots \in \CA^\infty$.

Following tradition, we use $\CL^*(E):=\bigcup_{n=1}^\infty \CL(E^n)$, and $\CL^\infty(E) := \CL(E^\infty)$.

For a word $\alpha = a_1a_2\ldots a_n \in \CL^n(E)$, we write $$s(\alpha) := \{s(p):p\in E^n,~ \CL(p) = \alpha\}$$ and $$r(\alpha) := \{r(p):p\in E^n,~ \CL(p) = \alpha\}.$$  Similarly for $\alpha = a_1a_2\ldots \in \CL^\infty(E)$, $$s(\alpha) := \{s(p):p\in E^\infty,~ \CL(p) = \alpha\}.$$

Each of these sets is a subset of $E^0$.  The use of $s$ and $r$ for an edge/path verses a label/word should be clear from the context.

A labeled graph $(E,~\CL)$ is said to be left--resolving if for each $v\in E^0$, the function $\CL:r^{-1}(v) \rightarrow \CA$ is injective.  In other words, no two edges pointing to the same vertex are labeled the same.

Let $\CB$ be a non-empty subset of $2^{E^0}$.  Given a set $A \in \CB$ we write $\CL(AE^1)$ for the set $\{\CL(e): e \in E^1 \text{ and } s(a) \in A\}$.

For a set $A \in \CB$ and a word $\alpha \in \CL^n(E)$ we define the relative range of $\alpha$ with respect to $A$ as $$r(A,\alpha):= \{r(p): \CL(p) = \alpha \text{ and } s(p)\in A\}.$$

We say $\CB$ is closed under relative ranges if $r(A,\alpha) \in \CB$ for any $A\in\CB$ and any $\alpha \in \CL^n(E)$.

$\CB$ is said to be \textbf{\textit{accommodating}} if
\begin{enumerate}
    \item $r(\alpha)\in \CB$ for each $\alpha \in \CL^*(E)$
    \item $\CB$ is closed under relative ranges
    \item $\CB$ is closed under finite intersections and unions.
\end{enumerate}

If $\CB$ is accommodating for $(E, \CL)$, the triple $\ls$ is called a labelled space.  For trivial reasons, we will assume that $\CB \neq \{\emptyset\}$

A labeled space $\ls$ is called \textit{\textbf{weakly left--resolving}} if for any $A,~B \in \CB$ and any $\alpha \in \CL^*(E)$ $$r(A \cap B, \alpha) = r(A,\alpha) \cap r(B,\alpha).$$

We say $\ls$ is \textit{\textbf{non--degenerate}} if $\CB$ is closed under relative complements.  A \textit{\textbf{normal}} labeled space is accommodating and non--degenerate.

\section{Labeled Graph $C^*$-algebras}\label{preliminaries2}

Let $\ls$ be a weakly left--resolving labeled space.  A representation of $\ls$ in a $C^*$-algebra consists of projections $\{p_A : A \in \CB \}$,
and partial isometries $\{ s_a : a \in \CA\}$, satisfying:
\begin{enumerate}
\item For any $A,~B \in \CB$, $p_Ap_B = p_{A\cap B}$, and  $p_{A\cup B} = p_A+p_B-p_{A\cap B}$.

\item For any $a,~b \in \CA$, $s_a^*s_b = p_{r(a)}\delta_{a,b}$.

\item For any $a \in \CA$ and $A \in \CB$, $s_a^*p_A = p_{r(A,a)}s_a^*$.

\item For $A \in \CB$ with $\CL(AE^1)$ finite and $A$ does not contain a sink, we have $$p_A = \sum_{a \in \CL(AE^1)}s_ap_{r(A,a)}s_a^*.$$

\end{enumerate}
The \textit{labeled graph $C^*$-algebra} is the $C^*$-algebra generated by a
universal representation of $\ls$.  For a word $\mu = a_1 \cdots
a_n,$ we write $s_\mu$ to mean $s_{a_1} \cdots s_{a_n}$.  
\begin{remark}
Given $A\in \CB$.  If $a,~b \in \CL(AE^1),$ then $s_ap_{r(A,a)}s_a^*\cdot s_bp_{r(A,b)}s_b^* = \delta_{a,b}s_ap_{r(A,a)}s_a^*$.
    Moreover, $p_As_ap_{r(A,a)}s_a^* = s_ap_{r(A,a)}p_{r(A,a)}s_a^*=s_ap_{r(A,a)}s_a^*$.  Therefore, for any finite subset $\CS$ of $\CL(AE^1)$ we have $$p_A \geq \sum_{a \in \CS}s_ap_{r(A,a)}s_a^*.$$

\end{remark}

The result of the following lemma is similar to results obtained for graph $C^*$-algebras and other related algebras (see \cite[Lemma 1.1]{KPR} or \cite{E2}).

One easily checks from the relations that $s_\mu^* s_\mu = p_{r(\mu)}$ and that $s_\nu^* s_\mu = 0$ unless one of $\mu$, $\nu$ extends the other. In fact,  

\begin{lemma} \label{lemma33.BES}  Let $\mu,~\nu \in \CL^*(E)$.  Then
\begin{displaymath}
\begin{array}{ll}
s_\mu^*s_\nu     &  =  \left\{\begin{array}{ll}
        p_{r(\mu)} & \textrm{if $\nu = \mu$}\\
        p_{r(\mu)}s_\gamma & \textrm{if $\nu = \mu\gamma$}\\
        s_\gamma^*p_{r(\nu)}  & \textrm{if $\mu = \nu\gamma$}\\
        0 & \text{otherwise}
        \end{array} \right. \\
\end{array}
\end{displaymath}
This, in turn, gives us:
\begin{displaymath}
\begin{array}{ll}
s_\mu^*s_\nu     &  =  \left\{\begin{array}{ll}
        p_{r(\mu)} & \textrm{if $\nu = \mu$}\\
        s_\gamma p_{r(\nu)} & \textrm{if $\nu = \mu\gamma$}\\
        p_{r(\mu)}s_\gamma^* & \textrm{if $\mu = \nu\gamma$}\\
        0 & \text{otherwise.}
        \end{array} \right. \\
\end{array}
\end{displaymath}

\end{lemma}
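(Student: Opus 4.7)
The plan is to prove both equalities by first establishing the $\mu = \nu$ case and then peeling off common prefixes; the second form follows from the first by a single application of the commutation relation (3) and its adjoint.

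\textbf{Step 1: The case $\nu = \mu$.} I would show $s_\mu^* s_\mu = p_{r(\mu)}$ by induction on $|\mu|$. The base case $|\mu|=1$ is relation (2). For the inductive step with $\mu' = \mu a$, compute
\[ s_{\mu a}^* s_{\mu a} = s_a^*(s_\mu^* s_\mu) s_a = s_a^* p_{r(\mu)} s_a = p_{r(r(\mu), a)} s_a^* s_a = p_{r(r(\mu),a)}\, p_{r(a)}, \]
using relations (3) and (2). Since $r(\mu a) = r(r(\mu), a) \subseteq r(a)$, relation (1) reduces this to $p_{r(\mu a)}$.

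\textbf{Step 2: The extension cases.} If $\nu = \mu\gamma$, then by definition $s_\nu = s_\mu s_\gamma$, so $s_\mu^* s_\nu = (s_\mu^* s_\mu) s_\gamma = p_{r(\mu)} s_\gamma$ by Step 1. Symmetrically, if $\mu = \nu\gamma$, taking adjoints gives $s_\mu^* s_\nu = s_\gamma^*(s_\nu^* s_\nu) = s_\gamma^* p_{r(\nu)}$.

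\textbf{Step 3: The incompatible case.} If neither word extends the other, let $\mu_0$ be the longest common prefix, and write $\mu = \mu_0 a \tilde\mu$, $\nu = \mu_0 b \tilde\nu$ with $a, b \in \CA$ and $a \neq b$. Then
\[ s_\mu^* s_\nu = s_{\tilde\mu}^* s_a^* (s_{\mu_0}^* s_{\mu_0}) s_b s_{\tilde\nu} = s_{\tilde\mu}^* s_a^* p_{r(\mu_0)} s_b s_{\tilde\nu}. \]
Applying relation (3) to commute $s_a^*$ past $p_{r(\mu_0)}$, this becomes $s_{\tilde\mu}^* p_{r(r(\mu_0),a)} (s_a^* s_b) s_{\tilde\nu}$, which is $0$ by relation (2) since $a \neq b$.

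\textbf{Step 4: The second form.} Taking the adjoint of relation (3) gives $p_A s_a = s_a p_{r(A,a)}$. Applied iteratively to a word, if $\nu = \mu\gamma$, then $p_{r(\mu)} s_\gamma = s_\gamma p_{r(r(\mu),\gamma)} = s_\gamma p_{r(\nu)}$; and if $\mu = \nu\gamma$, then relation (3) itself yields $s_\gamma^* p_{r(\nu)} = p_{r(r(\nu),\gamma)} s_\gamma^* = p_{r(\mu)} s_\gamma^*$. This converts the first display to the second.

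The only real subtlety will be Step 1, in verifying that the intersection $r(r(\mu), a) \cap r(a)$ collapses correctly to $r(\mu a)$; everything else is a direct application of relations (2) and (3).
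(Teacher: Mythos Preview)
Your proposal is correct and follows essentially the same route as the paper: both peel off the common prefix, use $s_\mu^*s_\mu=p_{r(\mu)}$ for the extension cases, and kill the incompatible case with relation~(2) at the first differing letter, then obtain the second display from the first via relation~(3) and its adjoint. The only cosmetic difference is that you spell out the induction for $s_\mu^*s_\mu=p_{r(\mu)}$ explicitly (the paper states this fact just before the lemma and takes it for granted), and you separate the passage to the second form into its own step rather than doing it inline.
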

\begin{proof}
If $\mu = \nu,$ then $s_\mu^*s_\nu = s_\mu^*s_\mu = p_{r(\mu)}$.

If $\nu = \mu\gamma$, then $s_\mu^*s_\nu = s_\mu^*s_{\mu\gamma}= s_\mu^*s_\mu s_\gamma = p_{r(\mu)}s_\gamma = s_\gamma p_{r(r(\mu),\gamma)} = s_\gamma p_{r(\mu\gamma)} = s_\gamma p_{r(\nu)}$.

If $\mu = \nu\gamma$, then $s_\mu^*s_\nu = s_{\nu\gamma}^*s_\nu = (s_\nu s_\gamma)^*s_\nu = s_\gamma^*s_\nu^*s_\nu = s_\gamma^*p_{r(\nu)} = p_{r(\nu\gamma)}s_\gamma^* = p_{r(\mu)}s_\gamma^*$.

Lastly, suppose $\mu = a_1a_2\ldots a_m$, $\nu = b_1b_2 \ldots b_n$, and neither $\mu$ nor $\nu$ extends the other.  This means $a_k \neq b_k$ for some $k \leq \min\{m,~n\}$.  We will assume that $k$ is the smallest such index, that is, $a_i = b_i$ for $i \leq k-1$, but $a_k \neq b_k$.  Then 
\begin{align*}
 s_\mu^*s_\nu & = (s_{a_1a_2\ldots a_{k-1}a_k \ldots a_m})^*s_{b_1b_2\ldots b_{k-1}b_k\ldots b_n}\\   
 & = (s_{a_k \ldots a_m})^*(s_{a_1a_2\ldots a_{k-1}})^*s_{b_1b_2\ldots b_{k-1}}s_{b_k\ldots b_n}\\
 & = (s_{a_k \ldots a_m})^*p_{r(b_1b_2\ldots b_{k-1})}s_{b_k\ldots b_n}, \text{ since } a_i = b_i, \text{ for } i <  k\\
 & = (s_{a_k \ldots a_m})^*s_{b_k\ldots b_n}p_{r(b_1b_2\ldots a_n)}\\
 & = s_{a_m}^* \ldots s_{a_k}^*s_{b_k}\ldots s_{b_n}p_{r(b_1b_2\ldots a_n)}\\
 & = 0 \text{ because } a_k \neq b_k.
\end{align*}
\end{proof}

Using $\epsilon$ to denote the empty word, and $\CL^{\#}(E)$ to denote $\CL(E^*) \cup \{\epsilon\}$, we find that
\[
C^*(E,\CL,\CB) = \overline{\text{span}} \{s_\mu p_A s_\nu^* : \mu,\;\nu \in \CL^{\#}(E) \text{ and } A\in\CB \}.
\]

Here we use $s_\epsilon$ to mean the unit element of the multiplier algebra of $C^*(E,\CL,\CB)$.

In \cite{JKP}, they provided the a definition of ``hereditary" subset of $\CB$.  We will restate it here; note that $E$ may contain sinks.

\begin{definition} \label{heredirary} For a subset $\CH$ of $\CB$ we say that $\CH$ is hereditary if it satisfies the following:
\begin{enumerate}
    \item for any $A\in \CH$ and for any $\alpha \in \CL^*(E),$ we have that  $r(A,\alpha) \in \CH$.
    \item $A \cup B \in \CH$ whenever $A,B \in \CH$.
    \item If $A \in \CH$ and $B\in \CB$ with $B \subseteq A,$ then $B \in \CH$.
\end{enumerate}
\end{definition}

Notice that, in addition to being closed under finite unions, $\CH$ is closed under arbitrary intersections.  Moreover, when $\ls$ is normal, if $A\in \CH$ and $B \in \CB,$ then $A \setminus B \in \CH$.

In the discussions to follow, we will assume that $\CH$ does not contain the empty set.  Thus, we modify (1) of Definition \ref{heredirary} to: $r(A, \alpha) \in \CH$, whenever $r(A, \alpha)$ is non--empty.



\section{Main results}

We start with a proposition on sub-algebras of $C^*\ls$. This is an important result on its own, and we will be using it to build the $C^*$-algebras of labeled graphs when the graphs are finite graphs with no loops or cycles. 

\begin{proposition} \label{Morita.eq} Let
$\ls$ be a labeled space and let $\CH$ be a
hereditary subset of $\CB$. Suppose also that $\forall a \in \CA,$ either $s(a)\in \CH$ or $\forall A \in \CH$ $s(a) \cap A = \emptyset$.
Then
$$I := \overline{span}\{s_\alpha p_A s_\beta^*:\alpha,\beta \in \CL^{\#}(E),
A \in \CH\}$$ is an ideal of $C^*\ls$ which is
Morita equivalent to
\begin{align*}
 \CC  :=  \overline{span} \{ & s_\alpha p_A s_\beta^*:  s(\alpha) \in\CH \text{ or } \alpha = \epsilon, \\   
 & s(\beta) \in \CH  \text{ or } \beta = \epsilon,  \text{ and } A\in \CH \}.
\end{align*}

\end{proposition}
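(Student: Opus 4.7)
\emph{That $I$ is an ideal.} Self-adjointness of $I$ is immediate from $(s_\alpha p_A s_\beta^*)^* = s_\beta p_A s_\alpha^*$, so it suffices to verify left absorption (right absorption being symmetric). Since $C^*\ls$ is the closed span of monomials $s_\mu p_B s_\nu^*$, I would compute $(s_\mu p_B s_\nu^*)(s_\alpha p_A s_\beta^*)$ with $A \in \CH$ by applying Lemma~\ref{lemma33.BES} to $s_\nu^* s_\alpha$. In each of the three nonzero branches the product collapses to an expression of the form $s_{\mu'} p_{A''} s_{\beta'}^*$ in which $A''$ arises from $A$ by some combination of intersection with sets of $\CB$ and one application of the relative range map $C \mapsto r(C,\gamma)$. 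Definition~\ref{heredirary}(1) and (3) keep $A'' \in \CH$, so the product lies in $I$.

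\emph{Morita equivalence.} Introduce the closed subspace
\[
X := \overline{\mathrm{span}}\{s_\alpha p_A s_\beta^* : \alpha \in \CL^{\#}(E),\; A \in \CH,\; s(\beta) \in \CH \text{ or } \beta = \epsilon\} \subseteq I,
\]
and treat it as a candidate $I$--$\CC$ imprimitivity bimodule under the forms ${}_I\langle x,y\rangle := xy^*$ and $\langle x,y\rangle_\CC := x^*y$. The left $I$-action on $X$ is a corollary of the first half; the right $\CC$-action is an analogous product calculation; and the compatibility ${}_I\langle x,y\rangle\cdot z = x\cdot\langle y,z\rangle_\CC$ holds trivially, both sides equaling $xy^*z$. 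Fullness reduces to one-line factorizations: every generator $s_\alpha p_A s_{\alpha'}^*$ of $I$ equals $(s_\alpha p_A)(s_{\alpha'} p_A)^*$, with both factors lying in $X$ via trailing word $\epsilon$; dually, every generator $s_\alpha p_A s_\beta^*$ of $\CC$ equals $(p_A s_\alpha^*)^*(p_A s_\beta^*)$, and both factors lie in $X$ because the source conditions that define $\CC$ coincide with the trailing-word condition that defines $X$.

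The principal technical obstacle is the containment $\langle X, X\rangle_\CC \subseteq \CC$. Analyzing a product $s_\beta p_A s_\alpha^* s_{\alpha'} p_{A'} s_{\beta'}^*$ via Lemma~\ref{lemma33.BES}, the delicate case is when one of $\alpha, \alpha'$ strictly extends the other, creating a new label word $\gamma$ that must be spliced into $\beta$ or $\beta'$; one must then show that the source of the resulting word still lies in $\CH$. The standing hypothesis on $\CA$ is precisely what delivers this: nonvanishing of the full product forces the initial letter $a_1$ of $\gamma$ to meet some set of $\CH$, so the dichotomy yields $s(a_1) \in \CH$, and Definition~\ref{heredirary}(3) applied to the containment $s(\beta\gamma) \subseteq s(\beta)$ (or $s(\beta\gamma) \subseteq s(a_1)$ when $\beta = \epsilon$) places the combined source inside $\CH$. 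Carefully tracking this across the four branches of Lemma~\ref{lemma33.BES} and the three closure clauses of Definition~\ref{heredirary} is the heart of the proof.
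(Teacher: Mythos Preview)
Your proposal is correct and follows essentially the paper's route: the paper defines the right ideal $\CX := \overline{\mathrm{span}}\{s_\alpha p_A s_\beta^* : s(\alpha)\in\CH \text{ or } \alpha=\epsilon,\ \beta\in\CL^{\#}(E)\}$---the adjoint of your $X$---and then verifies $\CX\CX^*=\CC$ and $\CX^*\CX=I$ via the same Lemma~\ref{lemma33.BES} case analysis, invoking the dichotomy on $s(a)$ at precisely the juncture you flag. One small correction: the left $I$-action $I\cdot X\subseteq X$ is not quite a free corollary of the ideal computation, since the case $\beta=\epsilon$ together with $\nu=\alpha\gamma$ already requires the $s(a)$ hypothesis to force $s(\gamma)\in\CH$; this is exactly the obstacle you handle in your final paragraph, just surfacing one step earlier than you indicate.
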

\begin{proof}

Let $A\in \CH,~B \in \CB$, and let $\alpha,~\beta,~\mu,~\nu \in \CL^{\#}(E)$.  Then $s_\alpha p_A s_\beta^* s_\mu p_B s_\nu^*$ is either zero or is of the form $s_{\alpha'} p_C s_{\nu'},$ where $C \subseteq A$ or $C \subseteq r(A,\gamma)$, for some $\gamma \in \CL^*(E)$; either way, $C \in \CH$. Moreover $\alpha'$ extends $\alpha$ and $\nu'$ extends $\nu$ (or they are equal). Thus $I$ is an ideal.  
Using a similar argument we see that  
$$\CX  :=  \overline{span} \{s_\alpha p_A s_\beta^*:  s(\alpha) \in \CH \text{ or } \alpha = \epsilon, \beta \in \CL^{\#}(E)\}$$ is a right ideal of $C^*\ls$. 

To show that $\CC$ is an algebra, let $s_\alpha p_A s_\beta^*, s_\mu p_B s_\nu^* \in \CC$ then the product $(s_\alpha p_A s_\beta^*) (s_\mu p_B s_\nu^*)$ is zero unless $\beta$ or $\mu$ extends the other.  In that case, say $\beta = \mu\gamma$ then $s_\alpha p_A s_\beta^* s_\mu p_B s_\nu^*=s_\alpha p_A s_{\mu\gamma}^* s_\mu p_B s_\nu^* = s_\alpha p_A s_\gamma^* p_{r(\mu)} p_B s_\nu^* = s_\alpha p_A p_{r(\beta)} p_{r(B,\gamma)}s_\gamma^*s_\nu^* = s_\alpha p_A p_{r(\beta)} p_{r(B,\gamma)}s_{\nu\gamma}^* = s_\alpha p_{A \cap r(\beta) \cap r(B,\gamma)}s_{\nu\gamma}^*$.  Now, either $\nu = \epsilon$ or  $s(\nu\gamma) \subseteq s(\nu) \in \CH \Rightarrow s(\nu\gamma) \in \CH$.  In the case $\nu = \epsilon$, looking at $p_{r(B,\gamma)}$, either the product is zero or $B \cap s(\gamma) \neq \emptyset$ which implies that $s(\gamma) \in \CH$.  Therefore $(s_\alpha p_A s_\beta^*)(s_\mu p_B s_\nu^*) \in \CC$.

To show that  $\CX\CX^* = \CC$, take $s_\alpha p_A s_\beta^*, s_\mu p_B s_\nu^* \in \CX$.  Then $s_\alpha p_A s_\beta^*\cdot (s_\mu p_B s_\nu^*)^* = s_\alpha p_A s_\beta^*\cdot s_\nu p_B s_\mu^*$.  This is zero unless $\beta$ or $\nu$ extends the other.  In that case, say $\beta = \nu\gamma$, then  $s_\alpha p_A s_\beta^*\cdot (s_\mu p_B s_\nu^*)^* = s_\alpha p_A  p_{r(\nu)} s_\gamma^* p_B s_\mu^* = s_\alpha p_A p_{r(\nu)} p_{r(B,\gamma)} s_\gamma^*  s_\mu^* = s_\alpha p_{A \cap r(\nu) \cap r(B,\gamma)} s_{\mu \gamma}^*$.  But $s(\mu \gamma) \subseteq s(\mu) \in \CH \Rightarrow s(\mu \gamma) \in \CH$, thus $s_\alpha p_{A \cap r(\nu) \cap r(B,\gamma)} s_{\mu \gamma}^* \in \CC$.   So $\CX\CX^* \subseteq \CC$.  
If $s_\alpha p_A s_\beta^* \in \CC$, then $s_\alpha p_A s_\beta^* = (s_\alpha p_A s_\epsilon^*)(s_\beta p_A s_\epsilon^*)^* \in \CX\CX^*$.  That is, $\CC \subseteq \CX\CX^*$. Therefore, $\CX\CX^* = \CC$.  Similarly, $\CX^*\CX = I$
\end{proof}
\begin{definition}
Let $\ls$ be any labeled space.  We call a non empty element $A$ of $\CB$ a minimal sinks set if $A \subseteq E^0_{sink}$ and for any $B \in \CB$ either $A \subseteq B$ or $A \cap B = \emptyset$.
\end{definition}

\begin{corollary} \label{Morita.corollary}
For a labeled space $\ls$, let $A\in\CB$ be a minimal sinks set.   Then, $$I_A = \overline{span}\{s_\alpha p_As_\beta^* : \alpha, \beta \in \CL^{\#}(E)\}$$ is a closed two sided ideal of $C^*\ls,$ which is isomorphic to $\CK(\ell^2(\CL^*(A)))$, where $\CL^*(A) := \{\alpha: \alpha \in \CL^*(E) \text{ and } r(\alpha) \supseteq A, \text{ or }  \alpha = \epsilon\}$.
\end{corollary}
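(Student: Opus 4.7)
The plan is to realize the spanning elements $s_\alpha p_A s_\beta^*$ (with $\alpha,\beta \in \CL^*(A)$) as a system of matrix units indexed by $\CL^*(A) \times \CL^*(A)$; the isomorphism $I_A \cong \CK(\ell^2(\CL^*(A)))$ then follows from the standard theory of matrix units. (Applying Proposition \ref{Morita.eq} with $\CH = \{A\}$ would only yield the weaker fact that $I_A$ is Morita equivalent to $\IC$, so I prefer the direct route.)

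First I restrict attention to $\alpha,\beta \in \CL^*(A)$. For any $\alpha \in \CL^*(E)$, minimality of $A$ applied to $r(\alpha) \in \CB$ yields $A \subseteq r(\alpha)$ or $A \cap r(\alpha) = \emptyset$. In the second case,
\[
s_\alpha p_A = s_\alpha p_{r(\alpha)} p_A = s_\alpha p_{r(\alpha) \cap A} = 0,
\]
so only $\alpha,\beta \in \CL^*(A)$ survive in the spanning set of $I_A$. Similarly $I_A$ is a two-sided ideal: any product of a generator of $I_A$ by a generator of $C^*\ls$ contains, after applying Lemma \ref{lemma33.BES}, an intersection of $A$ with another set in $\CB$, which by minimality collapses to $A$ or $\emptyset$.

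Next set $e_{\alpha,\beta} := s_\alpha p_A s_\beta^*$. The relation $e_{\alpha,\beta}^* = e_{\beta,\alpha}$ is immediate. For $e_{\alpha,\beta} e_{\gamma,\delta}$ I would apply Lemma \ref{lemma33.BES} to $s_\beta^* s_\gamma$. The crucial observation is that $A \subseteq E^0_{sink}$ forces $r(A,\eta) = \emptyset$ for every word $\eta$ of positive length, and hence
\[
s_\eta^* p_A = p_{r(A,\eta)} s_\eta^* = 0, \qquad p_A s_\eta = (s_\eta^* p_A)^* = 0.
\]
This annihilates both ``proper extension'' branches of Lemma \ref{lemma33.BES}, leaving only the case $\beta = \gamma$, in which $s_\beta^* s_\gamma = p_{r(\beta)}$ together with $A \subseteq r(\beta)$ give
\[
e_{\alpha,\beta} e_{\beta,\delta} = s_\alpha p_A p_{r(\beta)} p_A s_\delta^* = s_\alpha p_A s_\delta^* = e_{\alpha,\delta}.
\]
Thus $e_{\alpha,\beta} e_{\gamma,\delta} = \delta_{\beta,\gamma}\, e_{\alpha,\delta}$, where $\delta_{\beta,\gamma}$ is the Kronecker symbol.

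Finally, the computation $s_\alpha^* e_{\alpha,\alpha} s_\alpha = p_{r(\alpha)} p_A p_{r(\alpha)} = p_A$ shows that each diagonal $e_{\alpha,\alpha}$ is nonzero as long as $p_A \neq 0$ in $C^*\ls$, which holds by universality whenever $A \neq \emptyset$ (if $p_A = 0$ then $I_A = 0$ and the statement is vacuous). The assignment sending $e_{\alpha,\beta}$ to the rank-one operator on $\ell^2(\CL^*(A))$ that carries the basis vector indexed by $\beta$ to the one indexed by $\alpha$ therefore extends by linearity and norm density to the claimed isomorphism $I_A \cong \CK(\ell^2(\CL^*(A)))$. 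The main obstacle throughout is the sink-killing identity $s_\eta^* p_A = 0$ for $|\eta| \geq 1$ and its consequence that the extension cases of Lemma \ref{lemma33.BES} vanish in $e_{\alpha,\beta} e_{\gamma,\delta}$; once that is in hand, everything else is routine matrix-unit bookkeeping.
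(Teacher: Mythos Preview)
Your proposal is correct and follows essentially the same route as the paper: both arguments use Lemma~\ref{lemma33.BES} on the product $s_\alpha p_A s_\beta^* \cdot s_\mu p_A s_\nu^*$, invoke the sink property $r(A,\gamma)=\emptyset$ to kill the two ``proper extension'' branches, and conclude that the surviving elements form a system of matrix units indexed by $\CL^*(A)$. The only minor differences are that the paper obtains the ideal property by observing that $\{A\}$ is hereditary and appealing to Proposition~\ref{Morita.eq}, whereas you argue it directly from minimality, and that you add the (worthwhile) verification that the diagonal matrix units $e_{\alpha,\alpha}$ are nonzero.
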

\begin{proof} Notice that the singleton set $\CH = \{A\}$ is hereditary, thus $I_A$ is a two sided ideal. If $\alpha, \beta, \mu, \nu \in \CL^{\#}(E)$, using Lemma \ref{lemma33.BES}, 

\begin{displaymath}
\begin{array}{ll}
s_\alpha p_As_\beta^*\cdot s_\mu p_As_\nu^*     &  =  \left\{\begin{array}{ll}
        s_\alpha p_A p_{r(\beta)} p_A s_\nu^* & \textrm{if $\beta = \mu$}\\
        s_\alpha p_A s_\gamma p_{r(\beta)}  p_A s_\nu^* & \textrm{if $\beta = \mu\gamma$}\\
        s_\alpha p_A p_{r(\mu)}s_\gamma^*  p_A s_\nu^* & \textrm{if $\mu = \beta\gamma$}\\
        0 & \text{otherwise.}
        \end{array} \right. \\
\end{array}
\end{displaymath}
However, $s_\alpha p_A p_{r(\beta)} p_A s_\nu^* = s_\alpha p_{A \cap r(\beta)} s_\nu^* = s_\alpha p_A s_\nu^*$, if $r(\alpha), r(\beta), r(\nu) \supseteq A$; otherwise  $s_\alpha p_A p_{r(\beta)} p_A s_\nu^* = 0$.\\ $s_\alpha p_A s_\gamma p_{r(\beta)}  p_A s_\nu^* = s_\alpha  s_\gamma p_{r(A, \gamma)} p_{r(\beta)}  p_A s_\nu^* = 0$, since $A \subseteq E^0_{sink}$.  Similarly, $s_\alpha p_A p_{r(\mu)}s_\gamma^*  p_A s_\nu^* = s_\alpha p_A p_{r(\mu)} p_{r(A, \gamma)} s_\gamma^* s_\nu^* = 0$.\\ Thus, $\{s_\alpha p_As_\beta^*:r(\alpha), r(\beta) \supseteq A,~\alpha = \epsilon, \text{ or } \beta = \epsilon\}$ forms a set of matrix units.  Therefore, $I_A \cong \CK(\ell^2(\CL^*(A)))$.
\end{proof}

We take a short pause to look at a couple of examples; one labeled graph with two labeled spaces.
\begin{eg}
Consider the labeled graph: $$\xymatrix{{u} \ar@(ur,ul)_{a}[] \ar [r]^a & {v}}$$ Let $\CB_1 = \{ \{u, v\}, \emptyset \}$. $(E, \CL, \CB_1)$ is a normal labeled space.  Recall that $p_A s_\alpha = s_\alpha p_{r(A,\alpha)}$ and $s_\alpha^* p_A = p_{r(A,\alpha)}s_\alpha^*$.  In this case, for $A = \{u, v\}$ and any word $\alpha =aa\ldots a$, since $r(A,\alpha) = A$, we get $p_A s_\alpha = s_\alpha p_A$ and $s_\alpha^* p_A = p_As_\alpha^*$.  Therefore, if $n > m$, $s_a^n p_A (s_a^*)^m = s_a^n (s_a^*)^m p_A = s_a^{n-m} p_{r(a^m)}p_A = s_a^{n-m}p_A$.  If $n = m$, $s_a^n p_A (s_a^*)^m = s_a^n (s_a^*)^m p_A = p_A$.  And if $n < m$, $s_a^n p_A (s_a^*)^m = p_A s_a^n (s_a^*)^m = p_A p_{r(a^n)}(s_a^*)^{m-n} = p_A  (s_a^*)^{m-n}$. Therefore, $C^*(E,\CL,\CB_1) = \overline{span}\{s_a^k p_A,~ p_A,~ p_A(s_a^*)^k:k\in \IN\} \cong C(\IT).$ In this example, $E^0_{sink} \notin \CB_1$.

Now, let $\CB_2 = \{ \{u\}, \{v\}, \{u, v\}, \emptyset \}$. $(E, \CL, \CB_2)$ is also a normal labeled space. For $A = \{v\}$, $I_A = \overline{span}\{s_\alpha p_As_\beta^* : \alpha, \beta \in \CL^{\#}(E)\} = \overline{span}\{s_a^n p_A(s_a^*)^m : n, m \in \IN \cup \{0\}\} \cong \CK(\ell^2(\IN))$ is an ideal of $C^*(E, \CL, \CB_2)$.  Since $p_{\{u,v\}} = p_{\{u\}} + p_{\{v\}}$, the $C^*$-algebra is generated by $\{s_a^np_{\{u\}}(s_a^*)^m, s_a^np_{\{v\}}(s_a^*)^m : n.m \in \IN \cup \{0\}\}$.  

For $n,m,k,l \in \IN \cup \{0\},$ we will compute $s_a^np_{\{u\}}(s_a^*)^m\cdot s_a^kp_{\{u\}}(s_a^*)^l$. If $m = k$, 
\begin{align*}
 s_a^np_{\{u\}}(s_a^*)^m\cdot s_a^kp_{\{u\}}(s_a^*)^l & = s_a^np_{\{u\}}p_{r(a^m)}p_{\{u\}}(s_a^*)^l\\   
 & = s_a^np_{\{u\}}p_{\{u,v\}}p_{\{u\}}(s_a^*)^l\\
 & = s_a^np_{\{u\}}(s_a^*)^l.
\end{align*}
If $m > k$, 
\begin{align*}
 s_a^np_{\{u\}}(s_a^*)^m\cdot s_a^kp_{\{u\}}(s_a^*)^l & = s_a^np_{\{u\}}p_{\{u,v\}}p_{r(\{u\},a^{m-k})}(s_a^*)^{m-k}(s_a^*)^l\\
 & = s_a^np_{\{u\}}(s_a^*)^{l+m-k}.
\end{align*}
Similarly, (taking the adjoint), if $m < k$,

$s_a^np_{\{u\}}(s_a^*)^m\cdot s_a^kp_{\{v\}}(s_a^*)^l = s_a^{n+k-m}p_{\{u\}}(s_a^*)^l$.\\
Thus, $\{s_a^np_{\{u\}}(s_a^*)^m\}$ is a set of matrix units. Therefore, $\overline{span}\{s_a^np_{\{u\}}(s_a^*)^m : n, m \in \IN \cup \{0\}\} \cong \CK(\ell^2(\IN))$.

Next we will compute $s_a^np_{\{u\}}(s_a^*)^m\cdot s_a^kp_{\{v\}}(s_a^*)^l$.\\
If $m = k$, 

$s_a^np_{\{u\}}(s_a^*)^m\cdot s_a^kp_{\{v\}}(s_a^*)^l = s_a^np_{\{u\}}p_{\{u,v\}}p_{\{v\}}(s_a^*)^l = 0$.\\
If $m > k$, 
\begin{align*}
 s_a^np_{\{u\}}(s_a^*)^m\cdot s_a^kp_{\{v\}}(s_a^*)^l & = s_a^np_{\{u\}}p_{\{u,v\}}p_{r(\{v\},a^{m-k})}(s_a^*)^{m-k}(s_a^*)^l\\ 
 & = 0,\text{ since } p_{r(\{v\},a^{m-k})} = 0.
\end{align*}
Similarly, if $m < k$, $s_a^np_{\{u\}}(s_a^*)^m\cdot s_a^kp_{\{v\}}(s_a^*)^l = 0$.\\
This gives us 
\begin{align*}
C^*(E, \CL, \CB_2) & \cong \overline{span}\{s_a^np_{\{u\}}(s_a^*)^m\} \oplus \overline{span}\{s_a^np_{\{v\}}(s_a^*)^m\}\\ 
 & \cong \CK(\ell^2(\IN)) \oplus \CK(\ell^2(\IN)).
\end{align*}

\end{eg}

Next we will write out the $C^*$-algebra of a finite labeled graph when the graph has no loops or cycles.  This characterization is similar to \cite[Corollary 2.3]{KPR} for a finite directed graph with no loops.
In the proposition, we will assume that $E^0_{sink} \in \CB$ and that $\ls$ is normal. Afterwards, with the help of examples, we will see the significance of these assumptions.

\begin{proposition} \label{direct.sum}
Let $E$ be a finite graph with no loops (or cycles).  Suppose also that $E^0_{sink} \in \CB$, where $\ls$ is a normal labeled space. For each $x \in E^0_{sink}$ compute $A_x = \cap_{x\in A}A$.  Let $\CV=\{A_x:x\in E^0_{sink}\}$ . Then $$C^*\ls \cong \bigoplus_{V \in \CV}\CM_{n_V+1}(\IC),$$ where $n_V = $ the number of words $\alpha$ with $V \subseteq r(\alpha)$.
\end{proposition}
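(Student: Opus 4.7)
The plan is to identify $\CV$ with the collection of minimal sinks sets in $\CB$ and realize $C^*\ls$ as the internal direct sum of the ideals $I_V$ from Corollary \ref{Morita.corollary}.

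First I would verify that each $A_x$ is a minimal sinks set in the sense of the definition preceding Corollary \ref{Morita.corollary}. Because $E$ is finite, $\CB\subseteq 2^{E^0}$ is finite, so the intersection defining $A_x$ is finite and $A_x\in\CB$; since $E^0_{sink}\in\CB$ and $x\in E^0_{sink}$, we get $A_x\subseteq E^0_{sink}$. Minimality follows from normality: given $B\in\CB$, if $x\in B$ then $A_x\subseteq B$ directly, and if $x\notin B$ while $A_x\cap B\neq\emptyset$, then $A_x\setminus B\in\CB$ is a proper subset of $A_x$ containing $x$, contradicting the definition of $A_x$. Conversely, any minimal sinks set $V$ equals $A_x$ for every $x\in V$, so $\CV$ is exactly the collection of minimal sinks sets, and distinct elements of $\CV$ are automatically disjoint.

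Second, by Corollary \ref{Morita.corollary} each $I_V$ is a closed two-sided ideal isomorphic to $\CK(\ell^2(\CL^*(V)))$. Because $E$ is finite with no cycles, $\CL^*(E)$ is itself finite (paths cannot repeat vertices), so $|\CL^*(V)|=n_V+1$ and $I_V\cong\CM_{n_V+1}(\IC)$. I would then check that distinct $I_V,I_{V'}$ annihilate one another: in the product $(s_\alpha p_V s_\beta^*)(s_\mu p_{V'}s_\nu^*)$, the case $\beta=\mu$ produces the factor $p_V p_{V'}=p_{V\cap V'}=0$, the cases where one of $\beta,\mu$ strictly extends the other produce a factor $p_{r(V,\gamma)}$ or $p_{r(V',\gamma)}$ with $\gamma$ nonempty which vanishes because $V,V'\subseteq E^0_{sink}$, and all remaining cases vanish for word-length reasons (exactly as in the proof of Corollary \ref{Morita.corollary}). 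Hence $\sum_{V\in\CV}I_V$ is an internal direct sum of finite-dimensional (hence closed) ideals.

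Third, I would show that every spanning generator $s_\mu p_A s_\nu^*$ of $C^*\ls$ lies in this sum. Define the depth $d(A)$ of $A\in\CB$ as the maximum over $v\in A$ of the length of the longest path in $E$ starting at $v$; finiteness of $E$ and absence of cycles make $d(A)<\infty$. The argument proceeds by induction on $d(A)$. When $d(A)=0$, $A\subseteq E^0_{sink}$, and step one gives $A=\bigsqcup_{V\in\CV,\, V\subseteq A}V$, so $p_A=\sum_{V\subseteq A}p_V$ and the generator lies in $\sum_V I_V$. For the inductive step, the hypotheses $E^0_{sink}\in\CB$ and normality give the decomposition $p_A=p_{A\cap E^0_{sink}}+p_{A\setminus E^0_{sink}}$; the first summand is handled by the base case, and for the second, relation $(4)$ (applicable because $A\setminus E^0_{sink}$ contains no sink and $\CL((A\setminus E^0_{sink})E^1)$ is finite) yields
\[
s_\mu p_{A\setminus E^0_{sink}} s_\nu^* \;=\; \sum_{a\in\CL((A\setminus E^0_{sink})E^1)} s_{\mu a}\, p_{r(A\setminus E^0_{sink},a)}\, s_{\nu a}^*,
\]
where each $r(A\setminus E^0_{sink},a)$ has strictly smaller depth, so the inductive hypothesis finishes the argument.

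The main obstacle I expect is precisely this inductive decomposition: it simultaneously requires relation $(4)$ to peel off non-sink parts, the assumptions $E^0_{sink}\in\CB$ and normality to split sinks from non-sinks inside $\CB$, and the finiteness/no-cycle hypothesis to terminate the recursion (this is exactly where the sink and cycle examples after the proposition will illustrate what goes wrong). Once the decomposition is established, the isomorphism $C^*\ls\cong\bigoplus_{V\in\CV}\CM_{n_V+1}(\IC)$ follows immediately from the orthogonality of the $I_V$ and their matrix-algebra identifications.
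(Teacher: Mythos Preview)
Your proposal is correct and follows essentially the same route as the paper: verify that the $A_x$ are minimal sinks sets, invoke Corollary~\ref{Morita.corollary} to identify each $I_V$ with a matrix algebra, check orthogonality of the $I_V$, and then decompose an arbitrary generator $s_\mu p_A s_\nu^*$ by splitting off $A\cap E^0_{sink}$ and applying relation~(4) to the remainder until the process terminates. Your formalization of the termination step as an induction on the depth $d(A)$, and your explicit use of normality to establish minimality of $A_x$, make the argument more precise than the paper's, but the underlying strategy is the same.
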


\begin{proof}
When $x \in E^0_{sink}$ then $A_x = \cap_{x\in A}A$ is a finite intersection of elements of $\CB$, hence is in $\CB$, moreover $A_x \subseteq E^0_{sink}$.  The collection $\CV=\{A_x:x\in E^0_{sink}\}$ is a finite collection of mutually disjoint members of $\CB$.  Also, if $V \in \CV$ and $B \in \CB$ either $V \subseteq B$ or $V \cap B = \emptyset$.  Then by Corollary \ref{Morita.corollary}, each $V \ \in \CV$ gives us an ideal $I_V$ of $C^*\ls$. 

Now, let $\alpha,~ \beta \in \CL^{\#}(E)$ and $A \in \CB$.  Write $A = A_1 \cup C_1$, where $C_1 = A\cap E^0_{sink}$ and $A_1 = A \setminus C_1$. \\
Then $s_\alpha p_A s_\beta^* =s_\alpha p_{A_1} s_\beta^* +s_\alpha p_{C_1} s_\beta^*$.  Notice that $C_1$ is a disjoint union of elements of $\CV$, thus $s_\alpha p_{C_1} s_\beta^* = \sum_{V \in \CV, V \subseteq C_1}s_\alpha p_V s_\beta^*$.\\ \\ \\ Moreover $s_\alpha p_{A_1} s_\beta^* = \sum_{a \in \CL(A_1E^1)}s_{\alpha a} p_{r(A_1,a)}s_{\beta a}^*= \sum_{a \in \CL(AE^1)}s_{\alpha a} p_{r(A,a)}s_{\beta a}^*$.\\ \\
For each $a\in \CL^*(AE^1)$ write $r(A,a) = A_2 \cup C_2$, where $C_2 = r(A,a)\cap E^0_{sink}$ and $A_2 = r(A,a) \setminus C_2$.  Then $s_{\alpha a} p_{r(A,a)} s_{\beta a} = s_{\alpha a} p_{A_2} s_{\beta a} +s_{\alpha a} p_{C_2} s_{\beta }^*$. This implies $s_{\alpha a} p_{A_2} s_{\beta a} = \sum_{ab \in \CL(AE^2)}s_{\alpha ab} p_{r(A,ab)}s_{\beta ab}^*$, and $s_{\alpha a} p_{C_2} s_{\beta a}^* = \sum_{V \in \CV, V \subseteq C_2}s_{\alpha a} p_V s_{\beta a}^*$. \\ Since $E$ is a finite graph with no loops, repeating this process will eventually give us $s_\alpha p_A s_\beta^*$ as a sum of elements of the form $s_{\alpha \gamma} p_V s_{\beta \gamma}^*$ where $V \in \CV$.  For each $V \in \CV$ there are $n_V$ distinct words $\alpha$ in $\CL^*(E)$ with $r(\alpha) \supseteq V$, adding 1 for the empty word $\epsilon$, we get that $I_V \cong \CM_{n_v+1}(\IC)$. For $U, V \in \CV$, $s_\alpha p_U s_\beta^* \cdot s_\mu p_V s_\nu^* = 0$ unless $U = V$, hence the direct sum.
\end{proof}

In the next, rather trivial, example we, once again, see the significance of having $E^0_{sink}$ in $\CB$.

\begin{eg}  Consider the labeled graph:
$$\xymatrix{ & v  \\ u \ar[r]_{a} \ar[ur]^{a} & w \ar[r]_{b} & x}$$ Notice that $r(a) = \{v,w\},~ r(ab) = \{x\}$, and $E^0_{sink} = \{v,x\}$.  The smallest normal labeled space $(E, \CL, \CB_1)$ containing these sets has $\CB_1 = \{\{v,w\}, \{x\}, \{v, w, x\}, \{w\}, \{v\}, \emptyset\}$.  Using  Proposition \ref{direct.sum}, we get $C^*(E, \CL, \CB_1) = \CM_2(\IC) \oplus \CM_3(\IC)$.  On the other hand, if $\CB_2 \cong \{\{x\}, \{v,w\},\{v,w,x\}, \emptyset\}$, which still makes a normal labeled space, it is not difficult to see that $C^*(E, \CL, \CB_2) \cong \CM_3(\IC)$.
\end{eg}

\begin{eg}  Consider the labeled graph:
$$\xymatrix{ & v  \ar[dr]^{b}  \\ u \ar[r]_{a} & w \ar[r]_{b} & x}$$ Notice that $r(a) = \{w\},~ r(ab) = \{x\}$, and $E^0_{sink} = \{x\}$. Consider $\CB = \{\{v,w\}, \{w\}, \{x\}, \emptyset\}$,  $\ls$ is a labeled space (not normal).  It is not difficult to see that $C^*(E, \CL, \CB) = \CM_3(\IC)$. If we attempt to make this space normal by adding $\{v\}$ into $\CB$, the resulting set would not make $\ls$ weakly left--resolving.
\end{eg}

\begin{remark}
The next natural question would be ``can one normalize a labeled space?"  More precisely, given a labeled space $(E,\CL,\CB_1)$ is there a normal labeled space $(E,\CL, \CB_2)$ such that $C^*(E,\CL,\CB_1) \cong C^*(E,\CL,\CB_2)$? 
\end{remark}



\begin{thebibliography}{99}         


\bibitem{BP1} {T. Bates and D. Pask,} {\em $C^*$-algebras of labeled graphs}, J. Operator Theory. \textbf{57}(2007), 101--120.

\bibitem{BP2} {T. Bates and D. Pask,} {\em $C^*$-algebras of labeled graphs II - simplicity results},  Math. Scand. \textbf{104}(2009), no. 2, 249--274.

\bibitem {BPRS}  {\scshape Bates, T.; Pask, D.; Raeburn, I.; Szyma\'nski, W.} The $C^*$-algebras of row--finite Graphs. {\em New York J. Math.} {\bf 6} (2000), 307--324.

\bibitem {BEF}  {\scshape Bhat, R.; Elliott G.; Fillmore, P.}
Lectures on Operator Theory. {\em American Mathematical Society,} (2000).



\bibitem {Cu}  {\scshape Cuntz, J.} Simple $C^*$-algebras Generated by
Isometries {\em Commun. Math. Phys.} {bf 57} (1977), 173--185.

\bibitem{CK} {\scshape Cuntz, J.; Krieger, W.} A class of $C^*$-algebras and
topological Markov  chains, {\em Invent. Math.} {\bf 56} (1980),
251--268.

\bibitem {D}  {\scshape Dixmier, J.} $C^*$-algebras. {\em North-Holland Publishing Co.,} 1977.

\bibitem {DT}  {\scshape Drinen, D.; Tomforde, M.} The $C^*$-algebras of Arbitrary Graphs. {\em Rocky Mountain J. Math.} {\bf 35} (2005), no. 1, 105--135. 


\bibitem {E2} {\scshape Ephrem, M.} $C^*$-Algebra of The
$\mathbb{Z}^n$ Tree, {New York J. Math.} {\bf 17}  (2011) 1 -- 20

\bibitem {E3} {\scshape Ephrem, M.} Primitive Ideals of Labeled Graph $C^*$-algebras, {\em Houston Journal of Math. to appear}.

\bibitem {JKK} {\scshape Jeong, J. A.; Kang, E. J.; Kim, S. H.}
AF labeled graph $C^*$-algebras, {\em J. Funct. Anal.} {\bf 266}, (2014), 2153--2173.

\bibitem {JKP} {\scshape Jeong, J. A.; Kim, S. H.; Park, G. H.} The structure of gauge-invariant ideals of labeled graph $C^*$-algebras, {\em J. Funct. Anal.} {\bf 262}, (2012), no. 4, 1759--1780. 

\bibitem {JP} {\scshape Jeong, J.; Park, G. H.}
Simple labeled graph $C^*$-algebras are associated to disagreeable labeled spaces, {\em J. Mathematical Anal. and Appl.} {\bf 461}, (2017), no. 2, 1391--1403.


\bibitem {KPR} {\scshape Kumjian, A.; Pask, D.; Raeburn, I.}
Cuntz-Krieger Algebras of Directed Graphs, {\em Pacific
J. Math.} {\bf 184}, (1998), 161--174.

\bibitem {KPRR} {\scshape Kumjian, A.; Pask, D.; Raeburn, I. ; Renault, J.}
Graphs, Groupoids and Cuntz-Krieger Algebras,
{\em J. Funct. Anal.} {\bf 144}, (1997), 505--541.

\bibitem {M}  {\scshape Murphy, G.} $C^*$-algebras And Operator
Theory.  {\em Academic Press,} 1990.









\bibitem {T} {\scshape Tomforde, M.} A unified approach to Exel-Laca algebras and $C^*$-algebras associated to graphs, {\em J. Operator Theory.} {\bf 50} (2003) No. 2, 345--368.



\end{thebibliography}
\end{document}